\newtheorem{Thm}{Theorem}[section]
\def\KK{\ensuremath{\mathsf {Kar}}}
\def\pv#1{\ensuremath{{\mathsf{#1}}}}
\begin{document}

\title{Symbolic dynamics and semigroup theory}
\author{Alfredo Costa}
%\date{2018}
\maketitle
%\begin{multicols}{2}

  A major motivation for the development of semigroup theory
  was, and still is, its applications
  to the study of formal languages. Therefore,
  it is not surprising that the correspondence
  $\mathcal X\mapsto B(\mathcal X)$,
  associating to each symbolic dynamical system $\mathcal X$ the formal
  language $B(\mathcal X)$ of its blocks, entails
  a connection between symbolic dynamics and semigroup theory.
  In this article we survey some developments
  on this connection,
  since when it was noticed
  in an article by Almeida, published in the CIM bulletin, in 2003~\cite{Almeida:2003b}.
  
\section{Symbolic dynamics}

A \emph{topological dynamical system} is
a pair $(X,T)$ consisting of
a topological space $X$ and a continuous self-map $T\colon X\to X$.
It is useful to think of $X$ as
 representing a sort of ``space'', where each point $x$ is moved
 to $T(x)$ when a unit of time
 has passed.
A morphism between two topological dynamical systems
$(X_1,T_1)$ and $(X_2,T_2)$ is a continuous map $\varphi\colon X_1\to X_2$
such that $\varphi\circ T_1=T_2\circ\varphi$.
In this way, topological dynamical systems form a category, if
we take the identity on $X$
as the local identity at $(X,T)$.
In this category, an isomorphism is called a \emph{conjugacy},
and isomorphic objects are said to be \emph{conjugate}.

We focus on a special class of topological
dynamical systems, the symbolic ones.
Their applications in the study of
 general topological dynamical systems
 frequently stem from the following procedure:
 use symbols to mark a finite number of regions of the underlying space,
 and register, with a string of those symbols, the regions visited by a orbit.
 In the next paragraph we give a brief formal introduction to symbolic systems. For a more developed introduction, we refer to the book~\cite{Lind&Marcus:1995}. Also, the book review~\cite{Sunic:2014} is an excellent short introduction to the field and its ramifications.
 
Consider
a finite nonempty set $A$, whose elements are called \emph{symbols}, and
the set $A^{\mathbb Z}$ of sequences $(x_i)_{i\in\mathbb Z}$
of symbols of $A$ indexed by $\mathbb Z$. One should think
of an element $x=(x_i)_{i\in\mathbb Z}$ of $A^{\mathbb Z}$ as
a biinfinite string
$\ldots x_{-3}x_{-2}x_{-1}.x_0x_1x_2x_3\ldots$,
with the dot marking the reference position.
A \emph{block} of $x$ is a finite string appearing in
$x$: a finite sequence
of the form $x_{k}x_{k+1}\ldots x_{k+\ell}$,
with $k\in\mathbb Z$ and $\ell\geq 0$, 
also denoted $x_{[k,k+\ell]}$. Of special
relevance are the central blocks $x_{[-k,k]}$,
as one endows $A^{\mathbb Z}$
with the topology induced by the metric
$d(x,y)=2^{-r(x,y)}$
such that $r(x,y)$
is the minimum $k\geq 0$
for which $x_{[-k,k]}\neq y_{[-k,k]}$.
Hence, two elements of $A^{\mathbb Z}$ are ``close''
if they have a ``long'' common central block. 
The \emph{shift mapping} $\sigma\colon A^{\mathbb Z}\to A^{\mathbb Z}$, defined by $\sigma(x)=(x_{i+1})_{i\in\mathbb Z}$,
shifts the dot to the right.
A \emph{symbolic dynamical system}, also called
\emph{subshift},
is a pair $(\mathcal X,\sigma_{\mathcal X})$ formed
by a nonempty closed subspace $\mathcal X$ of $A^{\mathbb Z}$,
for some $A$, such that $\sigma(\mathcal X)=\mathcal X$, and
by the restriction $\sigma_{\mathcal X}$ of $\sigma$ to~$\mathcal X$.
As it is clear what self-map is considered,
$(\mathcal X,\sigma_{\mathcal X})$ is
identified with~$\mathcal X$.
When $\mathcal X=A^{\mathbb Z}$, the system is a \emph{full~shift}.
The \emph{sliding block code} from the subshift
$\mathcal X\subseteq A^{\mathbb Z}$ to the subshift $\mathcal Y\subseteq B^{\mathbb Z}$, with \emph{block map} $\Phi\colon A^{m+n+1}\to B$, \emph{memory} $m$ and \emph{anticipation}~$n$,
is the map $\varphi\colon \mathcal X\to \mathcal Y$
defined by $\varphi(x)=(\Phi(x_{[i-m,i+n]}))_{i\in \mathbb Z}$.
It follows from
the definition of the metric on a full shift
that the morphisms between subshifts
are precisely the sliding block codes.

\section{Formal languages}

Given a set $A$ of symbols, the set of finite nonempty strings
of elements of $A$ is denoted by $A^+$. In the jargon
of formal languages, $A$ is said to be
an \emph{alphabet}, the elements of $A$ and those of $A^+$
are respectively called \emph{letters} and \emph{words}, and
the subsets of $A^+$ are \emph{languages}. Moreover,
$A^+$ is viewed as a semigroup for the operation ``concatenation of words''.
For example, in $\{a,b\}^+$, the  product of
$aba$ and $bab$
is $ababab$.
In fact, $A^+$ is the \emph{free semigroup generated by $A$},
since, for every semigroup $S$,
every mapping $A\to S$ extends uniquely to a homomorphism $A^+\to S$.
Concerning semigroups, formal languages,
and their interplay, we give~\cite{Almeida:2005bshort} as a source of detailed references and as a very convenient guide, since, in this sort of introductory text,
connections with symbolic dynamics are also highlighted.

If $\mathcal X$ is a subshift of $A^{\mathbb Z}$,
we let $B(\mathcal X)$ be the language over the alphabet
$A$ such that $u\in B(\mathcal X)$
if and only if $u$ is a block of some element $x$
of $\mathcal X$.
As a concrete example, consider the
subshift $\mathcal E$,
known as the \emph{even shift},
formed by the biinfinite sequences of $a$'s and $b$'s that
have no odd number of $b$'s between two consecutive
$a$'s, that is, the biinfinite paths in the following labeled graph:
\tikzstyle{state}=[circle,fill=black!25,minimum size=17pt,inner sep=0pt]
\tikzset{every loop/.style={min distance=10mm,in=240,out=-60,looseness=5}}  
\begin{center}
    \begin{tikzpicture}[shorten >=1pt, node distance=2.5cm and 3cm, on grid,initial text=,semithick,line width=1pt]
  \node[state]   (1)   {$1$};
  \node[state]   (2) [right=of 1]   {$2$};
  \path[->]  (1)   edge  [bend right=35,red] node [below] {$b$} (2)
  (2)   edge  [bend right=35,red]  node [above] {$b$} (1)
  (1)   edge [out=225,in=135,looseness=8,blue]  node [auto] {$a$} (1);
\end{tikzpicture}
\end{center}

A language $L$ is \emph{factorial} if, for each
$u\in L$, every factor of $u$ belongs
to $L$. A factorial language over $A$ is \emph{prolongable}
if $u\in L$ implies $aub\in L$ for some $a,b\in A$.
It is easy to see that the languages of the form $B(\mathcal X)$,
with $\mathcal X$ a subshift of $A^{\mathbb Z}$, are precisely
the factorial prolongable languages over $A$. Moreover,
the correspondence $\mathcal X\mapsto B(\mathcal X)$
is a bijection between subshifts and
factorial prolongable languages.
Moreover, one has $\mathcal X\subseteq\mathcal Y$ if and only if
$B(\mathcal X)\subseteq B(\mathcal Y)$.
In view of this bijection, symbolic dynamics may be regarded as a subject of formal language theory.

Semigroups appear in the study of formal languages via the concept of \emph{recognition}.
In the labeled graph of the figure above, letters
$a$ and $b$ may be seen as
the binary relations
$a=\{(1,1)\}$ and $b=\{(1,2),(2,1)\}$. Let $S(\mathcal E)$ be the semigroup
of binary relations, on the vertices $1$ and $2$, generated by
$a$ and~$b$. For example, $ab$ is the binary relation $\{(1,2)\}$.
The words in $B(\mathcal E)$ are precisely the words
that in $S(\mathcal E)$ are not the empty relation~$\emptyset$.
Formally, given a semigroup homomorphism $\varphi\colon A^+\to S$,
a language $L\subseteq A^+$ is \emph{recognized}
by $\varphi$ if $L=\varphi^{-1}(P)$ for some subset
$P$ of~$S$.
Note that $B(\mathcal E)$
is recognized by the homomorphism $\varphi\colon \{a,b\}^+\to S(\mathcal E)$
such that $\varphi(a)=\{(1,1)\}$
and $\varphi(b)=\{(1,2),(2,1)\}$, since
$B(\mathcal E)=\varphi^{-1}(S(\mathcal E)\,\backslash\,\{\emptyset\})$.

A language over $A$ is \emph{recognized by
the semigroup $S$} when recognized by a homomorphism from $A^+$ into $S$.
It is said to be \emph{recognizable} if it is recognized
by a \underline{finite} semigroup. Recognizable languages
constitute one of the main classes of languages,
as they describe ``finite-like'' properties
of words, captured by finite devices.
Frequently the devices are finite automata,
which are labeled graphs with
a distinguished set of initial vertices
and final vertices. These devices recognize
the words labeling the paths from the initial to the final vertices.
Recognition by a finite automaton is the same
as recognition by a finite semigroup, because
in fact an automaton
may be seen as a semigroup with generators acting on its vertices.

Another reason why recognizable languages
matter is Kleene's theorem (1956)~\cite{Kleene:1956}, stating that the recognizable languages of $A^+$,
with $A$ finite, are precisely
the \emph{rational} languages of $A^+$, that is, the languages
which can be obtained from subsets of $A$ by applying
finitely many times the Boolean operations,
concatenation of languages, and the operation
that associates to each nonempty language $L$ the subsemigroup
$L^+$ of $A^+$ generated by $L$.
The rational languages
obtainable using only the first two of these three sets of operations,
the \emph{plus-free} languages\footnote{Actually, Sch\"utzenberger's result is usually formulated in terms of finite aperiodic monoids and
  languages admitting the empty word, with \emph{star-free} languages in place
of \emph{plus-free} languages.}, 
are precisely the languages recognized by finite \emph{aperiodic} semigroups~\cite{Schutzenberger:1965}.
This characterization, due to Sch\"utzenberger and dated from 1965, is one
of the first important applications of semigroups
to languages (for the reader unfamiliar with the concept: a semigroup is aperiodic if all its subgroups, i.e., subsemigroups
that have a group structure, are trivial).
Eilenberg, later on (1976), provided the framework
for several results in the spirit of that of Sch\"utzenberger on aperiodic semigroups,
by establishing a natural correspondence between
\emph{pseudovarieties of semigroups} (classes of finite semigroups
closed under taking homomorphic images, subsemigroups and finitary products)
and the types of classes of languages recognized by their semigroups, called
\emph{varietes of languages}~\cite{Eilenberg:1976}.

\section{Classification of subshifts}

The correspondence $\mathcal X\mapsto B(\mathcal X)$
provides ways of
classifying subshifts in
special classes
with ``static'' definitions
in terms of $B(\mathcal X)$ that, from a semigroup theorist viewpoint,
may be more convenient than the alternative
definitions of a more ``dynamical'' flavor.

As a first example, consider the~\emph{irreducible} subshifts:
these are the subshifts $\mathcal X$
such that, for every $u,v\in B(\mathcal X)$,
one has $uwv\in B(\mathcal X)$ for some word $w$.
The dynamical characterization
is that a subshift is irreducible when it has a dense forward orbit.

In the same spirit, a subshift $\mathcal X$ is \emph{minimal} 
(for the inclusion)
if and only if
$B(\mathcal X)$ is \emph{uniformly recurrent},
the latter meaning that
for every $u\in B(\mathcal X)$,
there is a natural number $N_u$
such that $u$ is a factor of every word of $B(\mathcal X)$ of
length $N_u$. Note that uniform recurrence implies
irreducibility. A procedure for building minimal subshifts,
with a semigroup-theoretic flavor that was useful for
getting results mentioned in the final section, is as follows.
Consider a \emph{primitive substitution} $\varphi\colon A^+\to A^+$,
i.e., a semigroup endomorphism $\varphi$ of $A^+$
such that every letter of $A$ appears in $\varphi^n(a)$,
for all $a\in A$ and all sufficiently large~$n$:
if $\varphi$ is not the identity in an one-letter alphabet,
then the language of factors of words of the form $\varphi^k(a)$, with
$k\geq 1$ and $a\in A$, is factorial and prolongable,
thus defining a subshift $\mathcal X_\varphi$,
and in fact this subshift is minimal.

A subshift $\mathcal X$ is \emph{sofic} when
$B(\mathcal X)$ is recognizable.
Hence, the even subshift is sofic.
Sofic and minimal subshifts are arguably the most important
big realms of subshifts, with only periodic subshifts in the intersection.
Every subshift $\mathcal X$ of $A^{\mathbb Z}$
is characterized by a set~$F$ of \emph{forbidden blocks},
a language $F\subseteq A^+$ such that
$x\in \mathcal X$ if and only if
no element of $F$ is a block of $x$.
We write $\mathcal X=\mathcal X_F$ for such a set $F$.
It turns out that $\mathcal X$ is sofic
if and only if $F$ can be chosen to be rational.
A subshift $\mathcal X$ is of \emph{finite type} if there is a finite set
of forbidden blocks $F$ such that $\mathcal X=\mathcal X_F$.
The class of finite type subshifts is closed under conjugacy
and is contained in the class of sofic subshifts.
The inclusion is strict: the even subshift is not
a finite type subshift.

The most important open problem in
symbolic dynamics consists in classifying (irreducible) finite type subshifts up to conjugacy.
A related problem is the classification of (irreducible) sofic subshifts up to \emph{flow equivalence}.
 In few words,
 two subshifts are flow equivalent when they have equivalent
 mapping tori,
 a description that is somewhat technical,
 when made precise. Next is an alternative characterization (from~\cite{Parry&Sullivan:1975}), more
prone to a semigroup theoretical approach.
Take $\alpha\in A$
and  a letter $\diamond$ not in $A$.
Consider
the homomorphism $E_\alpha\colon A^+\to (A\cup\{\diamond\})^+$
that replaces $\alpha$ by $\alpha\diamond$ and leaves the remaining
letters of $A$ unchanged.
The symbol expansion of a subshift $\mathcal X\subseteq A^{\mathbb Z}$ with respect to $\alpha$
is the subshift
whose blocks are factors of words in
$E_\alpha(B(\mathcal X))$.
Flow equivalence
is
the least equivalence
relation
between subshifts
that contains the conjugacy relation and the symbol expansions.
A symbol expansion
on $\alpha$ represents a time dilation when reading $\alpha$ in
a biinfinite string, thus flow equivalence
preserves ``shapes'' of orbits, but not in a ``rigid'' way.
Finite type subshifts have been completely classified
up to flow equivalence~\cite{Franks:1984}. The strictly sofic
case remains open.
In~\cite{Boyle&Carlsen&Eilers:2018}
one finds recent  developments.

\section{The Karoubi envelope of a subshift}

Let $L$ be a language over $A$.
Two words $u$
and $v$ of $A^+$ are \emph{syntactically equivalent} in $L$
if they share the contexts in which they appear in
words of $L$.
Formally, the \emph{syntactic congruence} $\equiv_L$
is defined by
$u\equiv_L v$
if and only if
the equivalence
$xuv\in L\Leftrightarrow xvy\in L$
holds, for all (possibly empty)
words $x,y$ over $A$.
 The quotient $S(L)=A^+/{\equiv_L}$
is the \emph{syntactic semigroup} of $L$.
The quotient homomorphism
$\eta_L\colon A^+\to A^+/{\equiv_L}$
is minimal among the onto homomorphisms
recognizing $L$: if the onto homomorphism $\varphi\colon A^+\to S$
recognizes $L$, then there is a unique onto
homomorphism $\theta\colon S\to S(L)$
such that the diagram
\begin{equation*}
  \xymatrix{
    A^+
    \ar[r]^\varphi
    \ar[rd]_{\eta_L}
    &S\ar[d]^{\theta}\\
    &S(L)
  }
\end{equation*}
commutes.
In particular,
$L$ is recognizable if and only if $S(L)$ is finite.
More generally, $L$ is recognized by a semigroup of a pseudovariety $\pv V$
if and only if $S(L)$ belongs to $\pv V$.
For example,
a language is plus-free if and only if $S(L)$ is an aperiodic semigroup,
in view of Sch\"utzenberger's characterization of plus-free languages.
Since $S(L)$ is computable if $L$ is adequately described (e.g, by an automaton), this gives an algorithm to decide if
a rational language is plus-free.
This example
illustrates why syntactic semigroups
and pseudovarieties are important for studying rational languages.

Let $S$ be a semigroup, and denote by $E(S)$ the set of idempotents
of $S$. The~\emph{Karoubi envelope} of~$S$
is the small category $\KK(S)$ such that
\begin{itemize}
\item the set of objects is $E(S)$;
\item an arrow from $e$ to $f$ is a triple $(e,s,f)$
  such $s\in S$ and $s=esf$;
\item composition of consecutive arrows
  is given by $(e,s,f)(f,t,g)=(e,st,g)$ (we compose
  on the opposite direction adopted by category theorists);
\item the unit at vertex $e$ is $(e,e,e)$.
\end{itemize}
This construction found an application
in finite semigroup theory in the \emph{Delay Theorem}~\cite{Tilson:1987}.
Avoiding details, this result concerns a certain correspondence
$\pv V\mapsto \pv V'$ between
semigroup pseudovarieties,
with one of the formulations of the Delay Theorem
stating that a finite semigroup $S$
belongs to $\pv V'$
if and only if 
$\KK(S)$ is the quotient
of a category admitting a faithful functor into a monoid in~$\pv V$.
Interestingly, the variety of languages
corresponding in Eilenberg's sense to $\pv V'$ 
is, roughly speaking, determined
by the inverse images
of languages recognized by semigroups of~$\pv V$
via block maps of sliding block codes.
Hence, it is natural to
relate the Karoubi envelope with subshifts. This was done
in the paper~\cite{ACosta&Steinberg:2016},
of which we highlight some results in the next paragraphs.

The \emph{syntactic semigroup $S(\mathcal X)$ of a subshift~$\mathcal X$} is the syntactic semigroup
of $B(\mathcal X)$.
One finds this object
in some papers~\cite{Jonoska:1996a,Jonoska:1998,Beal&Fiorenzi&Perrin:2005b,Beal&Fiorenzi&Perrin:2005a,Costa:2006,Costa:2006b,Chaubard&Costa:2008},
namely for (strictly) sofic subshifts.
Several invariants
encoded in $S(\mathcal X)$
were deduced.
The \emph{Karoubi envelope of $\mathcal X$},
denoted $\KK(\mathcal X)$, is
the Karoubi envelope of
$S(\mathcal X)$.
Conjugate subshifts
do not need to have isomorphic syntactic semigroups, but
the Karoubi envelope of a subshift is invariant in the sense of the following result from~\cite{ACosta&Steinberg:2016}.

\begin{Thm}\label{t:splittingisinvflow}
If $\mathcal X$ and $\mathcal Y$ are flow equivalent
subshifts, then the categories $\KK(\mathcal X)$ and $\KK(\mathcal Y)$ are equivalent.
\end{Thm}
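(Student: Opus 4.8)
The plan is to reduce the statement to two more tractable facts: that $\KK(\mathcal X)$ is a conjugacy invariant, and that a single symbol expansion changes $\KK(\mathcal X)$ only up to equivalence of categories. Since flow equivalence is generated by conjugacies and symbol expansions, establishing invariance under each generator gives invariance under flow equivalence. The conjugacy part is the one I expect to be already available in the literature \cite{ACosta&Steinberg:2016}: even though conjugate subshifts need not have isomorphic syntactic semigroups, the Karoubi envelope is coarser, and the standard way to see this is to pass through \emph{higher block presentations}. For a subshift $\mathcal X\subseteq A^{\mathbb Z}$ one forms $\mathcal X^{[N]}\subseteq (A^N)^{\mathbb Z}$ by recoding windows of length $N$ as single letters; $\mathcal X^{[N]}$ is conjugate to $\mathcal X$, and by the Krieger–Williams-type decomposition of conjugacies into elementary ``splittings'' it suffices to check (i) that $\KK(\mathcal X^{[N]})\simeq\KK(\mathcal X)$ and (ii) that the analogous elementary recodings preserve $\KK$ up to equivalence. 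The key technical engine here is that although $S(\mathcal X^{[N]})$ and $S(\mathcal X)$ differ, the natural relation between blocks of $\mathcal X$ and blocks of $\mathcal X^{[N]}$ induces, at the level of idempotents and the arrows $(e,s,f)$ between them, a fully faithful essentially surjective functor.

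Concretely, for the conjugacy step I would exhibit explicit functors in both directions. Given a sliding block code $\varphi\colon\mathcal X\to\mathcal Y$ realizing a conjugacy, with block map $\Phi$, one wants an induced assignment on idempotents of the syntactic semigroups; the issue is that $\Phi$ acts on letters and is not a semigroup homomorphism $S(\mathcal X)\to S(\mathcal Y)$. The remedy is to work with a window shift: up to conjugacy one may assume $\varphi$ has memory and anticipation zero, i.e. is a one-block code, and then a letter-to-letter map does induce a relational morphism whose restriction to the Karoubi envelope is a genuine functor. One then checks that reversing the roles of $\mathcal X$ and $\mathcal Y$, together with the fact that $\varphi$ is invertible as a sliding block code, makes these functors mutually quasi-inverse. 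The verification that composition and identities are respected, and that the functor is fully faithful, is a direct but slightly fiddly computation with the defining equation $s=esf$ in each syntactic semigroup.

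For the symbol-expansion step, let $E_\alpha\colon A^+\to (A\cup\{\diamond\})^+$ be the expansion homomorphism and $\mathcal X'$ the symbol expansion of $\mathcal X$ on $\alpha$, whose blocks are the factors of words in $E_\alpha(B(\mathcal X))$. Because $E_\alpha$ is an \emph{injective} homomorphism and every block of $\mathcal X'$ is a factor of such an image, there is a tight correspondence between the syntactic congruence of $B(\mathcal X)$ and that of $B(\mathcal X')$: contexts in $\mathcal X'$ are, after deleting occurrences of $\diamond$, essentially contexts in $\mathcal X$, modulo the finitely many ``phases'' introduced by whether a context starts in the middle of an expanded $\alpha$. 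One then shows $S(\mathcal X')$ is built from $S(\mathcal X)$ by a controlled local construction — morally adjoining, near each occurrence of $\alpha$, one extra ``transit'' element — and that this operation does not create or destroy idempotents in a way visible to the Karoubi envelope: the new element $\diamond$-type factors are not idempotent, and the idempotents of $S(\mathcal X')$ are in bijection with those of $S(\mathcal X)$, with the arrow sets $(e,s,f)$ matched up by the deletion map. This yields an equivalence $\KK(\mathcal X')\simeq\KK(\mathcal X)$.

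The main obstacle, I expect, is the conjugacy step, and within it the precise bookkeeping that lets one replace an arbitrary sliding block code by a one-block code without losing control of the syntactic semigroup: the passage to a higher block presentation changes the alphabet and hence the syntactic semigroup in a genuinely nontrivial way, and one must verify by hand that the induced map on idempotent-indexed hom-sets is a bijection. The symbol-expansion step is conceptually the cleaner of the two, since $E_\alpha$ being a literal homomorphism makes the transport of syntactic data almost mechanical; the only care needed there is the finite phase ambiguity around the inserted letter $\diamond$. Once both elementary cases are in hand, closing under the equivalence relation generated by them is immediate from the definition of flow equivalence given above, and the theorem follows.
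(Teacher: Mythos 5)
This survey does not actually contain a proof of Theorem~\ref{t:splittingisinvflow}; the result is quoted from \cite{ACosta&Steinberg:2016}. Your overall architecture --- since flow equivalence is generated by conjugacies and symbol expansions, verify invariance of $\KK(\mathcal X)$ under each generator separately --- is exactly the strategy of that reference, so the reduction itself is fine. The trouble is that in both of your two steps the one idea that makes the theorem true is missing: an idempotent $e$ of a syntactic semigroup satisfies $e=e^{n}$ for every $n$, hence is represented by words of arbitrary length, and these long representatives absorb the ``margins'' created by block maps. This is the delay phenomenon behind Tilson's Delay Theorem \cite{Tilson:1987}, and it is precisely why the Karoubi envelope, rather than the syntactic semigroup itself, is the right invariant; your write-up never invokes it.

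Concretely, in the conjugacy step, reducing to a one-block code $\varphi$ does not make the problem symmetric, because $\varphi^{-1}$ still has nonzero memory and anticipation, so ``reversing the roles'' is not available; worse, a one-block code does not induce a well-defined map $S(\mathcal X)\to S(\mathcal Y)$ at all, since syntactic classes need not be preserved --- this is exactly why conjugate subshifts can have non-isomorphic syntactic semigroups --- so there is no homomorphism whose ``restriction to the Karoubi envelope'' you can take. The functor must be built by applying the block map to a long power $w^{k}$ representing $esf$ and trimming margins whose width is controlled by the memory and anticipation of $\varphi$ and $\varphi^{-1}$, with well-definedness coming from absorption by the idempotents $e$ and $f$ at the ends; that verification is the entire content of the step. (Also, Williams-type splitting decompositions are a finite-type phenomenon; for arbitrary subshifts you should instead factor a conjugacy as a higher-block code followed by a one-block code.) In the symbol-expansion step your key claim --- that the idempotents of $S(\mathcal X')$ biject with those of $S(\mathcal X)$ with matching hom-sets --- is false, and if it were true it would yield an isomorphism of categories rather than the mere equivalence the theorem asserts. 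Expansion creates genuinely new idempotents: the different ``phases'' of $E_\alpha(w)$ relative to the inserted $\diamond$ give distinct idempotents of $S(\mathcal X')$; already for $\mathcal X$ the one-letter full shift, $S(\mathcal X)$ is trivial while $S(\mathcal X')$ contains the distinct idempotents $[a\diamond]$ and $[\diamond a]$ together with a zero. What must be proved is that the functor induced by $E_\alpha$ is fully faithful and essentially surjective, i.e., that each new idempotent is isomorphic \emph{inside} $\KK(\mathcal X')$ to one coming from $\mathcal X$ --- again an argument that runs through padding by idempotents, not through a bijection of objects.
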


For some classes of subshifts,
the Karoubi envelope is of no use.
For example, all irreducible finite type subshifts
have equivalent Karoubi envelopes.
But in the strictly sofic case, the Karoubi envelope
does bring  meaningful information,
as testified by several examples given in~\cite{ACosta&Steinberg:2016}.
We already mentioned the previous existence in the literature
of several (flow equivalence) invariants encoded
in $S(\mathcal X)$.
It turns out that the Karoubi envelope
is the best possible syntactic invariant for flow equivalence of sofic subshifts:
indeed, the main result in~\cite{ACosta&Steinberg:2016},
which we formulate precisely below, states that all syntactic invariants
of flow equivalence of sofic subshifts are encoded in the Karoubi envelope.
First, it is convenient to formalize what a syntactic flow invariant is.
An equivalence relation $\vartheta$ on the class of sofic subshifts is:
 an \emph{invariant of flow equivalence} if
  $\mathcal X \mathrel{\vartheta} \mathcal Y$ whenever $\mathcal X$
  and $\mathcal Y$ are flow equivalent;
 a \emph{syntactic invariant} if
  $\mathcal X \mathrel{\vartheta} \mathcal Y$ whenever $S(\mathcal X)$ and $S(\mathcal Y)$ are isomorphic;
 a \emph{syntactic invariant of flow equivalence} if it satisfies the two former properties.

\begin{Thm}\label{t:master-syntactic-invariant}
If $\vartheta$ is a syntactic invariant of flow equivalence
of sofic subshifts and $\mathcal X$ and $\mathcal Y$ are sofic
shifts such that $\KK(\mathcal X)$ is equivalent to $\KK(\mathcal Y)$, then
$\mathcal X\mathrel{\vartheta}\mathcal Y$.
\end{Thm}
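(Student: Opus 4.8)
The plan is to reduce the statement to a purely combinatorial assertion about sofic subshifts and then to establish that assertion by realizing an abstract equivalence of Karoubi envelopes through explicit flow-equivalence moves.

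First I would isolate the finest syntactic invariant of flow equivalence. Let $\rho$ be the equivalence relation on sofic subshifts generated by the two relations ``$S(\mathcal X)$ is isomorphic to $S(\mathcal Y)$'' and ``$\mathcal X$ is flow equivalent to $\mathcal Y$''; concretely, $\mathcal X\mathrel\rho\mathcal Y$ means that there is a finite chain $\mathcal X=\mathcal Z_0,\mathcal Z_1,\dots,\mathcal Z_n=\mathcal Y$ of sofic subshifts in which consecutive terms are either flow equivalent or have isomorphic syntactic semigroups. By construction $\rho$ is a syntactic invariant of flow equivalence, and it is contained in every such invariant: any equivalence relation $\vartheta$ that is both a syntactic invariant and an invariant of flow equivalence contains the two generating relations, hence contains their generated equivalence relation $\rho$. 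Therefore it suffices to prove the theorem for $\vartheta=\rho$; in view of Theorem~\ref{t:splittingisinvflow}, and of the triviality that isomorphic syntactic semigroups yield equivalent Karoubi envelopes, this amounts to showing the nontrivial inclusion: if $\KK(\mathcal X)$ is equivalent to $\KK(\mathcal Y)$, then $\mathcal X\mathrel\rho\mathcal Y$.

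To prove that inclusion I would work through symbol expansions. The key technical input is an analysis of how a symbol expansion $E_\alpha$ transforms the syntactic semigroup: passing from $\mathcal X$ to its symbol expansion with respect to $\alpha$ replaces $S(\mathcal X)$ by an explicit ``inflation'', obtained by blowing up the elements that involve $\alpha$, and one checks, consistently with Theorem~\ref{t:splittingisinvflow}, that this operation does not change the Karoubi envelope up to equivalence. I would then prove a normal-form statement: given sofic subshifts $\mathcal X$ and $\mathcal Y$ with $\KK(\mathcal X)$ equivalent to $\KK(\mathcal Y)$, suitable iterated symbol expansions of $\mathcal X$ and of $\mathcal Y$ produce sofic subshifts $\mathcal X'$ and $\mathcal Y'$ with $S(\mathcal X')$ isomorphic to $S(\mathcal Y')$. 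Granting this, the desired chain is: $\mathcal X$ is flow equivalent to $\mathcal X'$, then $\mathcal X'\mathrel\rho\mathcal Y'$ because the syntactic semigroups are isomorphic, then $\mathcal Y'$ is flow equivalent to $\mathcal Y$.

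The main obstacle is exactly this normal-form and realizability statement. Equivalence of categories is a loose relation, seeing objects only up to isomorphism and allowing passage along fully faithful essentially surjective functors, whereas flow equivalence and syntactic isomorphism are rigid. One must therefore show that all the slack hidden in ``$\KK(\mathcal X)$ equivalent to $\KK(\mathcal Y)$'' can be absorbed by inflations of the syntactic semigroup of the kind produced by symbol expansions: the expansions must be flexible enough to simulate both the multiplicity with which an equivalence functor repeats isomorphic objects and the way it spreads arrows over parallel idempotents. Two points make this delicate. First, not every finite semigroup is the syntactic semigroup of a sofic subshift, so the whole argument must remain inside the realizable class; this is where one needs the inflations to be again syntactic semigroups of sofic subshifts lying in the correct flow-equivalence class, which I would verify by exhibiting concrete presentations via covers of $\mathcal X$ and tracking the effect of $E_\alpha$ on them. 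Second, one must make sure that after enough expansions the two inflated semigroups become genuinely isomorphic, not merely Morita equivalent; this requires pinning down precisely which semigroup-theoretic data survive in $\KK(S)$ up to equivalence, essentially the local monoids at idempotents together with the bimodule-like connecting structure recorded by a skeleton of $\KK(S)$, and then arranging the expansions on both sides so as to align that data on the nose.
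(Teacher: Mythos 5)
The survey states this theorem without proof --- it is the main result of \cite{ACosta&Steinberg:2016} --- so your attempt has to be measured against the proof given there rather than against anything in this text. Your opening reduction is correct and is the natural first move: the smallest syntactic invariant of flow equivalence is the equivalence relation $\rho$ generated by ``isomorphic syntactic semigroups'' together with flow equivalence; every $\vartheta$ as in the statement, being an equivalence relation containing both generating relations, contains $\rho$; and, by Theorem~\ref{t:splittingisinvflow} together with the trivial fact that $S(\mathcal X)\cong S(\mathcal Y)$ forces $\KK(\mathcal X)$ equivalent to $\KK(\mathcal Y)$, the theorem reduces to the single implication that equivalence of $\KK(\mathcal X)$ and $\KK(\mathcal Y)$ yields $\mathcal X\mathrel{\rho}\mathcal Y$. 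This much is right and essentially cost-free.

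Everything after that is a description of what would have to be proved rather than a proof. The entire content of the theorem sits in the ``normal-form and realizability statement'' you yourself flag as the main obstacle: that an abstract equivalence of categories between $\KK(\mathcal X)$ and $\KK(\mathcal Y)$ can be converted, by finitely many symbol expansions on each side, into an honest isomorphism $S(\mathcal X')\cong S(\mathcal Y')$. You correctly name the two difficulties (staying inside the class of semigroups realizable as syntactic semigroups of sofic shifts in the right flow class, and upgrading a Morita-type equivalence to an isomorphism on the nose), but you supply no mechanism for resolving either: no actual computation of the syntactic semigroup of $E_\alpha(\mathcal X)$ in terms of $S(\mathcal X)$, no invariant extracted from a skeleton of $\KK(S)$ that controls when the two inflated semigroups become isomorphic, and no argument that the alignment process terminates. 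It is also not self-evident that symbol expansions alone provide enough freedom to absorb all the slack in an equivalence of categories; establishing this is precisely where the bulk of the work in \cite{ACosta&Steinberg:2016} lies, and it exploits special structural features of syntactic semigroups of sofic subshifts (presentations via covers, the presence of a zero, generation by the images of the letters) that your sketch does not engage with. As it stands, then, your proposal is a plausible and well-organized proof outline whose decisive step is missing.
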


Outside the sofic realm, the Karoubi envelope
was successfully applied in~\cite{ACosta&Steinberg:2016}
to what is arguably an almost complete classification
of the \emph{Markov-Dyck subshifts},
a class of subshifts introduced by Krieger~\cite{Krieger:2000}.
Loosely speaking, a Markov-Dyck subshift~$D_G$ is formed by biinfinite
strings of several types
of parentheses, subject to the
usual parenthetic rules, and to additional restrictions
defined by a graph $G$. The edges of $G$ are
the opening parentheses, and consecutive opening parentheses
appearing in an element of $D_G$ correspond to consecutive edges, with a natural
symmetric rule for closing parentheses also holding.
Flow invariance of $\KK(D_G)$, together with a characterization
of $S(D_G)$, implicit in~\cite{Jones&Lawson:2014},
gives the following result (a different and independent proof appears in~\cite{Krieger:2015}).

\begin{Thm}
  Let $G$ and $H$ be finite graphs.
  If each vertex of $G$ or of $H$
  has out-degree not equal to one and in-degree at least one,
  then $D_G$
  and $D_H$ are flow equivalent
  if and only if $G$ and $H$ are isomorphic.
\end{Thm}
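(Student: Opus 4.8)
The ``if'' direction needs nothing new: an isomorphism $G\to H$ simply relabels the parentheses, hence induces a conjugacy $D_G\to D_H$, and conjugate subshifts are flow equivalent. So the whole content is the ``only if'' direction, and the plan is to route it through the flow invariance of the Karoubi envelope. Assume $D_G$ and $D_H$ are flow equivalent; by Theorem~\ref{t:splittingisinvflow} the categories $\KK(D_G)$ and $\KK(D_H)$ are equivalent. It therefore suffices to prove the following reconstruction statement: \emph{a finite graph $G$ all of whose vertices have out-degree different from one and in-degree at least one is determined up to isomorphism by the equivalence type of the category $\KK(D_G)$.} The argument then splits into (a) making $S(D_G)$, hence $\KK(D_G)$, explicit, and (b) reading $G$ off the category using only equivalence-invariant data.

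For (a), the plan is to start from the description of $S(D_G)=S(B(D_G))$ that one can extract from~\cite{Jones&Lawson:2014} and from Krieger's analysis~\cite{Krieger:2000}: up to a standard identification, $S(D_G)$ is a graph inverse semigroup of $G$ --- in particular an inverse semigroup with zero, with commuting idempotents. What I need from it is this. Nonzero elements have a normal form ``(path of closing parentheses)(path of opening parentheses)'', the two paths sharing their distinguished vertex; the idempotents are $0$ together with elements $\varepsilon_\alpha$ indexed by finite paths $\alpha$ of $G$; $\varepsilon_\alpha\le\varepsilon_\beta$ iff $\beta$ is a prefix of $\alpha$, so that below a ``vertex idempotent'' $\varepsilon_v$ (the path of length zero at $v$) the atoms are exactly the $\varepsilon_e$ for $e$ an edge issuing from $v$; and $\varepsilon_\alpha$ is $\mathcal D$-equivalent to $\varepsilon_\beta$ (that is, $\varepsilon_\alpha=ss^{-1}$ and $\varepsilon_\beta=s^{-1}s$ for some $s$) iff $\alpha$ and $\beta$ have the same terminal vertex. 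Passing to $\KK(D_G)=\KK(S(D_G))$ and then to a skeleton, the objects become the vertices of $G$ --- the $\mathcal D$-classes of idempotents other than the class $\{0\}$ --- plus one object contributed by $0$, and the hom-sets between vertex-objects are encoded by pairs of coterminal paths of $G$ modulo the evident identifications. The point to stress is that the relevant structure is intrinsic to $\KK(D_G)$ up to equivalence: an idempotent endomorphism of an object splits, in the idempotent-complete category $\KK(D_G)$, through a unique sub-object, so ``the poset of sub-objects of an object'' and ``the isomorphism class of a sub-object'' are equivalence-invariant notions.

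For (b), the plan is to reconstruct $G$ as follows. The vertex set is the set of nonzero objects of a skeleton of $\KK(D_G)$. For each vertex $v$ the maximal proper sub-objects of the corresponding object are, by (a), in natural bijection with the edges issuing from $v$, and the isomorphism class of the sub-object attached to an edge $e$ is its terminal vertex; together with the source data (below which object the sub-object sits) this recovers the edge set of $G$ with both endpoint maps. The degree hypotheses are used essentially here. An out-degree-zero vertex has no proper sub-object --- it is a maximal object --- but is still seen; an out-degree-$\ge 2$ vertex has its several maximal sub-objects genuinely visible; the in-degree-$\ge 1$ condition guarantees that the vertex idempotent $\varepsilon_v$ is realized in $S(D_G)$ (via a closing-then-opening of an edge into $v$), so that every vertex does occur as a $\mathcal D$-class and the recovered vertex set is exactly $V(G)$. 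The excluded case of out-degree exactly one is precisely the degeneracy that a symbol expansion can introduce or remove: when $v$ has a single out-edge $e$ to $w$, the object $\varepsilon_v$ dominates exactly what its sub-object $\varepsilon_e$ dominates, so $v$ is ``one forced step'' above a copy of $w$ and can be contracted within the flow-equivalence class, which is exactly why the resulting classification is only ``almost'' complete. Checking that none of this depends on the chosen skeleton or on representative paths is routine.

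The step I expect to be the genuine obstacle is (b) done rigorously: isolating inside $\KK(D_G)$, \emph{using only data invariant under equivalence of categories}, a canonical copy of $G$, and verifying that the bijections claimed above (maximal proper sub-objects $\leftrightarrow$ out-edges, together with the two endpoint maps) really are forced by the category. A secondary but unavoidable point is to pin down the precise form of $S(D_G)$ from~\cite{Jones&Lawson:2014} --- in particular how sinks and the adjoined zero are handled --- so that the inverse-semigroup computations in (a) are legitimate. Once the reconstruction statement is established, combining it with Theorem~\ref{t:splittingisinvflow} closes the argument; an independent proof, not going through the Karoubi envelope, is the one in~\cite{Krieger:2015}.
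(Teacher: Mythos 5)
Your proposal follows exactly the route the paper indicates (the paper itself only cites the proof from~\cite{ACosta&Steinberg:2016}, saying it follows from flow invariance of the Karoubi envelope together with the identification of $S(D_G)$ as a graph inverse semigroup via~\cite{Jones&Lawson:2014}): reduce to reconstructing $G$ from the equivalence class of $\KK(D_G)$, with vertices recovered as the nonzero $\mathcal D$-classes of idempotents and out-edges as the maximal proper subobjects together with their isomorphism types. The outline and the role you assign to the degree hypotheses are correct, so this is essentially the same approach as the paper's.
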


\section{Free profinite semigroups}

We already looked at the importance
of (pseudovarieties~of) finite semigroups in the study of (varieties~of)
rational languages.
It is well known that
free algebras (e.g., free groups, free Abelian groups, free semigroups, etc.)
are crucial for the study of varieties
of algebras, but for pseudovarieties,
a difficulty arises: there is no universal object within the category of \emph{finite} semigroups.
To cope with this difficulty, an approach successfully
followed by semigroup theorists,
since the 1980's, was to enlarge the class of finite semigroups, by considering
profinite semigroups. We pause to define the latter,
giving~\cite{Almeida:2005bshort} as a supporting reference.

A \emph{profinite semigroup}
is a compact semigroup (i.e., one with a compact Hausdorff
topology for which the semigroup operation is continuous)
that is \emph{residually finite}, in the sense that every
pair of distinct elements $s,t$ of $S$ admits a continuous homomorphism $\varphi$ from $S$ onto a finite
semigroup $F$ such that
$\varphi(s)\neq\varphi(t)$, where finite semigroups
get the discrete topology.

Assuming $A$ is finite,
consider in $A^+$ the metric
$d(u,v)=2^{-r(u,v)}$ such that $r(u,v)$ is the least possible size
of the image of a homomorphism
$\psi\colon  A^+\to S$ satisfying $\psi(u)\neq \varphi(v)$.
The  completion~$\widehat{A^+}$ of $A^+$ with respect to $d$ is a profinite
semigroup.
Moreover, each map
$\varphi\colon A\to S$ from $A$ into
a profinite semigroup $S$ has a unique extension to a continuous homomorphism $\widehat\varphi\colon \widehat{A^+}\to S$. Hence, $\widehat{A^+}$ is
the \emph{free profinite semigroup generated by~$A$}.
The next theorem gives a glimpse of
why free profinite  semigroups matter~\cite{Almeida:1994a}.
This theorem identifies the free profinite semigroup as the Stone dual of the Boolean algebra of recognizable languages.

\begin{Thm}
  The recognizable languages of $A^+$, are
  the traces in $A^+$ of the clopen subsets of $\widehat{A^+}$:
  if $L\subseteq A^+$ is recognizable,
  then $\overline{L}$ is clopen in $\widehat{A^+}$, and, conversely, if $K$ is clopen in $\widehat{A^+}$,
  then $K\cap A^+$ is recognizable.
\end{Thm}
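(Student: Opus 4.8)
The plan is to establish the two directions of the characterization separately, using the defining metric on $\widehat{A^+}$ and the universal property of the completion.

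**Forward direction.** Let $L\subseteq A^+$ be recognizable, so there is a homomorphism $\varphi\colon A^+\to F$ onto a finite semigroup $F$ with $L=\varphi^{-1}(P)$ for some $P\subseteq F$. First I would extend $\varphi$ to the unique continuous homomorphism $\widehat\varphi\colon\widehat{A^+}\to F$ (where $F$ carries the discrete topology), using the universal property stated just above the theorem. Since $F$ is discrete, $\widehat\varphi^{-1}(P)$ is clopen in $\widehat{A^+}$; I claim this clopen set equals $\overline{L}$. One inclusion is immediate: $L=\varphi^{-1}(P)\subseteq\widehat\varphi^{-1}(P)$, and since the latter is closed, $\overline{L}\subseteq\widehat\varphi^{-1}(P)$. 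For the reverse inclusion, I would use that $A^+$ is dense in $\widehat{A^+}$ together with continuity of $\widehat\varphi$: any point of $\widehat\varphi^{-1}(P)$ is a limit of words, and by continuity those words eventually land in $P$ under $\varphi$, hence lie in $L$; so $\widehat\varphi^{-1}(P)\subseteq\overline{L}$. Thus $\overline{L}=\widehat\varphi^{-1}(P)$ is clopen.

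**Converse direction.** Let $K\subseteq\widehat{A^+}$ be clopen; I must show $K\cap A^+$ is recognizable. The key point is that clopen subsets of a profinite semigroup are unions of classes of some finite-index clopen congruence — equivalently, the indicator function of $K$ factors continuously through a finite quotient. Concretely, by compactness $K$ and its complement can be separated by finitely many continuous homomorphisms into finite semigroups; taking the product, one obtains a single continuous homomorphism $\pi\colon\widehat{A^+}\to F$ onto a finite semigroup $F$ and a subset $Q\subseteq F$ with $K=\pi^{-1}(Q)$. Restricting $\pi$ to $A^+$ gives a homomorphism $\varphi=\pi|_{A^+}\colon A^+\to F$, and then $K\cap A^+=\varphi^{-1}(Q)$, which exhibits $K\cap A^+$ as recognized by the finite semigroup $F$.

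**The main obstacle** is the topological fact used in the converse: that every clopen subset of a profinite semigroup is a preimage of a subset under a continuous homomorphism onto a finite semigroup (the ``separation by finite quotients'' step). This rests on the definition of residual finiteness together with compactness of $\widehat{A^+}$: for each pair consisting of a point in $K$ and a point in its complement one gets a finite quotient separating them, and a standard compactness argument over the (compact) complement of $K$ reduces the family of quotients to a finite one, whose product does the job. Everything else — the extension of $\varphi$, density of $A^+$, and the routine manipulation of preimages — is straightforward given the universal property of $\widehat{A^+}$ and the fact that finite semigroups are discrete.
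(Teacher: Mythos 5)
Your proposal is correct, and it is the standard argument for this result; the paper itself gives no proof, merely citing Almeida's book \cite{Almeida:1994a}, so there is nothing to diverge from. The only point worth tightening is the compactness step in the converse: separating the clopen set $K$ from its complement requires covering the compact set $K\times K^c$ (or, equivalently, two nested compactness arguments, first over $K^c$ for each fixed $x\in K$ and then over $K$) before taking the finite product of quotients --- but this is exactly the ``standard'' argument you allude to, so there is no real gap.
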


The elements of $\widehat{A^+}$
constitute a sort of generalization of the words in $A^+$,
and for that reason they are often named \emph{pseudowords}.
The elements in
$\widehat{A^+}\,\textbackslash\, A^+$ are the \emph{infinite} pseudowords over $A$.
While the algebraic-topological structure of $A^+$ is poor,
that of $\widehat{A^+}$ is very rich: for example,
$A^+$ has no subgroups, while $\widehat{A^+}$
contains all finitely generated free profinite
groups when $|A|\geq 2$, and actually many more groups~\cite{Rhodes&Steinberg:2008}.
The structure of $\widehat{A^+}$
is nowadays less mysterious than it was fifteen years ago,
symbolic dynamics having been very useful for achieving that.
Our goal until the end of the text is to give examples
of such utility. 

Most connections between symbolic dynamics and free profinite semigroups
developed over Almeida's idea of considering,
for each subshift $\mathcal X$ of $A^{\mathbb Z}$,
the topological closure $\overline{B(\mathcal X)}$ of $B(\mathcal X)$
in $\widehat {A^+}$~\cite{Almeida:2003b,Almeida:2005bshort}.

In a semigroup $S$, the quasi-order $\leq_{\mathcal J}$
is defined by $s\leq_{\mathcal J}t$ if and only if
$t$ is a factor of $s$. The
equivalence relation on $S$ induced by $\leq_{\mathcal J}$
is denoted by $\mathcal J$.
By standard compactness arguments, when $\mathcal X$
is an irreducible subshift there is a $\leq_{\mathcal J}$-minimum $\mathcal J$-class
of $\widehat{A^+}$ among the $\mathcal J$-classes contained in $\overline{B(\mathcal X)}$ (equivalently, intersecting $\overline{B(\mathcal X)}$),
as explained in~\cite{Costa&Steinberg:2011}.
This $\mathcal J$-class
is denoted $J(\mathcal X)$.
The proof of the existence of $J(\mathcal X)$ also entails
that it is a \emph{regular} $\mathcal J$-class,
that is, one that contains idempotents.
One has $J(\mathcal X)=J(\mathcal Y)$
if and only if $\mathcal X=\mathcal Y$, and so
$J(\mathcal X)$ contains all information about $\mathcal X$.
Something more holds: one
has $\mathcal X\subseteq \mathcal Y$
if and only if $J(\mathcal Y)\leq_{\mathcal J}J(\mathcal X)$.
For the next statement, have in mind
that an infinite pseudoword $u$ of $\widehat{A^+}$
is a \emph{$\leq_{\mathcal J}$-maximal infinite
pseudoword} if every factor of $u$
either belongs to $A^+$
or is $\mathcal J$-equivalent to $u$.

\begin{Thm}[\cite{Almeida:2005c}]
  An element $u$ of $\widehat{A^+}$ is a $\mathcal J$-maximal infinite
  pseudoword if and only if $u\in J(\mathcal X)$
  for some minimal subshift $\mathcal X$ of $A^{\mathbb Z}$.
\end{Thm}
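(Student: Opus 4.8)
The plan is to pin $u$ to a canonical subshift and, for a minimal $\mathcal X$, to identify $J(\mathcal X)$ with a set described purely by factorization. For an infinite pseudoword $w$, write $L(w)\subseteq A^+$ for its set of \emph{finite} factors; peeling off first letters shows an infinite pseudoword has a finite prefix of each length, so $L(w)$ is a factorial language containing words of every length. I would use freely the standard toolkit for $\widehat{A^+}$: that $A^+$ is a discrete subspace; that for a word $x$ the set $\{t\in\widehat{A^+}:x\text{ is a factor of }t\}$ is clopen, being the closure of the recognizable language $A^*xA^*\cap A^+$ (so that a word which is a factor of some $t\in\overline{B(\mathcal X)}$ already lies in $B(\mathcal X)$, and, more precisely, $w\in\overline{B(\mathcal X)}$ if and only if $L(w)\subseteq B(\mathcal X)$); that multiplication is continuous and $\widehat{A^+}\cup\{1\}$ is compact, so that $\leq_{\mathcal J}$ is a closed relation; together with the recalled properties of $J(\mathcal X)$.

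The heart of the argument is a lemma: \emph{if $\mathcal X$ is minimal, then the infinite pseudowords lying in $\overline{B(\mathcal X)}$ form a single $\mathcal J$-class, namely $J(\mathcal X)$.} First, every infinite $v\in\overline{B(\mathcal X)}$ satisfies $L(v)=B(\mathcal X)$: the inclusion $L(v)\subseteq B(\mathcal X)$ is in the toolkit, and for the reverse I would pick words $v_i\in B(\mathcal X)$ with $v_i\to v$ (necessarily $|v_i|\to\infty$, as $v$ is infinite) and, given $w\in B(\mathcal X)$, use uniform recurrence of $B(\mathcal X)$ to write $v_i=\gamma_i\,w\,\delta_i$ for all large $i$; a convergent subsequence of $(\gamma_i,\delta_i)$ then yields $v=\gamma\,w\,\delta$, so $w\in L(v)$. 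Second, any two infinite $v,t\in\overline{B(\mathcal X)}$ are mutually factors: writing $t=\lim_m t_m$ with $t_m\in B(\mathcal X)=L(v)$, we get $v=\alpha_m\,t_m\,\beta_m$, and a convergent subsequence of $(\alpha_m,\beta_m)$ gives $v=\alpha\,t\,\beta$ by continuity of the product, so $t$ is a factor of $v$ and, by symmetry, $v$ is a factor of $t$; thus $v\mathrel{\mathcal J}t$. Hence all infinite pseudowords of $\overline{B(\mathcal X)}$ lie in one $\mathcal J$-class $C$, which is nonempty (a compact space is not an infinite discrete set, so $\overline{B(\mathcal X)}\neq B(\mathcal X)$) and contains $J(\mathcal X)$, since by the $\leq_{\mathcal J}$-minimality of $J(\mathcal X)$ every word of the infinite language $B(\mathcal X)$ is a factor of every element of $J(\mathcal X)$, so no such element can be a word. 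As $J(\mathcal X)\subseteq C$ and $C$ lies in a single $\mathcal J$-class, that class is $J(\mathcal X)$ and $C=J(\mathcal X)$.

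Granting the lemma, the implication ``$u\in J(\mathcal X)$ with $\mathcal X$ minimal $\Rightarrow$ $u$ is a $\mathcal J$-maximal infinite pseudoword'' is immediate: $u$ is infinite by the lemma, and any infinite factor $v$ of $u$ has $L(v)\subseteq L(u)\subseteq B(\mathcal X)$, hence $v\in\overline{B(\mathcal X)}$, hence $v\in J(\mathcal X)$, i.e. $v\mathrel{\mathcal J}u$. For the converse, let $u$ be a $\mathcal J$-maximal infinite pseudoword; I would first show $L(u)$ is uniformly recurrent. If it were not, some $w\in L(u)$ would fail to be a factor of words $z_N\in L(u)$ whose lengths tend to infinity; realizing each $z_N$ as a finite factor of $u$, say $u=p_N\,z_N\,q_N$, and passing to a convergent subsequence, continuity of the product gives $u=p\,Z\,q$ with $Z$ infinite (as $|z_N|\to\infty$) and with no factor $w$ --- because the set of pseudowords having $w$ as a factor is clopen and the $z_N$ lie in its complement. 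But $Z$ is an infinite factor of $u$, so $\mathcal J$-maximality forces $Z\mathrel{\mathcal J}u$, whence $u$, and so $w$, is a factor of $Z$: a contradiction. Thus $L(u)$ is uniformly recurrent; together with factoriality and the presence of words of every length this makes $L(u)$ prolongable, so $L(u)=B(\mathcal X_u)$ for a subshift $\mathcal X_u$, which is minimal by the uniform-recurrence criterion. Finally $u\in\overline{B(\mathcal X_u)}$, because $L(u)=B(\mathcal X_u)$, so the lemma yields $u\in J(\mathcal X_u)$, as desired.

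The main obstacle is the second half of the lemma --- that two infinite pseudowords of $\overline{B(\mathcal X)}$, with $\mathcal X$ minimal, each divide the other. This is the only place where minimality, i.e. uniform recurrence, is genuinely used, and it depends on arranging the compactness bookkeeping so that the three factors in $v=\alpha_m\,t_m\,\beta_m$ (and in $v_i=\gamma_i\,w\,\delta_i$) converge along one subsequence, after which continuity of multiplication closes the argument. A secondary technical point, used in the converse, is the clopenness of ``having a given word as a factor'': it is exactly what lets the avoidance of $w$ survive passage to the limit $Z$, and it rests on the identification of recognizable languages with clopen subsets of $\widehat{A^+}$ recalled above.
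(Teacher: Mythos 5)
The paper does not prove this theorem --- it is quoted from \cite{Almeida:2005c} without argument --- so there is no in-text proof to compare against; judged on its own, your proof is correct and follows essentially the standard route of that reference: identify, for minimal $\mathcal X$, the infinite part of $\overline{B(\mathcal X)}$ with $J(\mathcal X)$ via uniform recurrence, and conversely extract a uniformly recurrent factor language from a $\mathcal J$-maximal pseudoword. The one toolkit item doing real work is the reverse implication $L(w)\subseteq B(\mathcal X)\Rightarrow w\in\overline{B(\mathcal X)}$, which is not a formal consequence of clopenness of recognizable languages but a genuine lemma (every pseudoword lies in the closure of its set of finite factors, proved by a Ramsey-type factorization over a finite quotient); it is standard in this literature, but you are right to list it explicitly rather than treat it as obvious.
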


The next theorem states that, in a natural sense, $\widehat{A^+}$ is very ``large'' and very ``high'' (a weaker version appears in~\cite{Costa:2001a}, with
a harder proof).
Its proof is a good example
of the potential of symbolic dynamics in the study of free profinite semigroups.
A \emph{regular} pseudoword is one that is $\mathcal J$-equivalent to
an idempotent.

\begin{Thm}\label{t:ALCO-chains-antichains}
  Let $A$ be an alphabet with at least two letters.
  For the relation $<_{\mathcal J}$ in
  $\widehat{A^+}$, there are both chains and anti-chains with $2^{\aleph_0}$
  regular elements.
\end{Thm}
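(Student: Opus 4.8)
The plan is to transport the statement into symbolic dynamics by means of the assignment $\mathcal X\mapsto J(\mathcal X)$ recalled above. The key facts I would use are that, for every irreducible subshift $\mathcal X$ of $A^{\mathbb Z}$, the set $J(\mathcal X)$ is a \emph{regular} $\mathcal J$-class of $\widehat{A^+}$, that $J(\mathcal X)=J(\mathcal Y)$ forces $\mathcal X=\mathcal Y$, and that $\mathcal X\subseteq\mathcal Y$ if and only if $J(\mathcal Y)\leq_{\mathcal J}J(\mathcal X)$. The first step is to observe that these facts already reduce the theorem to a combinatorial statement about subshifts. Indeed, given a family $(\mathcal X_i)_{i\in I}$ of irreducible subshifts of $A^{\mathbb Z}$ with $|I|=2^{\aleph_0}$, choose one idempotent $e_i$ inside the regular $\mathcal J$-class $J(\mathcal X_i)$. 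The $e_i$ are regular elements, and they are pairwise distinct because distinct subshifts yield distinct $\mathcal J$-classes. If the family is totally ordered by inclusion, then the classes $J(\mathcal X_i)$ are pairwise $\leq_{\mathcal J}$-comparable; since two distinct $\mathcal J$-classes that are $\leq_{\mathcal J}$-comparable are $<_{\mathcal J}$-comparable, the $e_i$ form a $<_{\mathcal J}$-chain of $2^{\aleph_0}$ regular elements. If instead the subshifts $\mathcal X_i$ are pairwise incomparable for inclusion, the classes $J(\mathcal X_i)$ are pairwise $\leq_{\mathcal J}$-incomparable, so the $e_i$ form a $<_{\mathcal J}$-anti-chain of $2^{\aleph_0}$ regular elements. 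Thus it suffices to exhibit, over a two-letter alphabet, a $2^{\aleph_0}$-chain and a $2^{\aleph_0}$-anti-chain of irreducible subshifts for the inclusion order.

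For the anti-chain I would take minimal subshifts: a nonempty subshift contained in a minimal one equals it, so distinct minimal subshifts are incomparable for inclusion, and minimality implies irreducibility. It is classical that the two-letter alphabet already carries $2^{\aleph_0}$ pairwise distinct minimal subshifts --- for instance the Sturmian subshifts, one for each irrational in $(0,1)$. Feeding such a family into the reduction above yields the anti-chain part of the theorem.

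For the chain I would use $S$-gap shifts (see \cite{Lind&Marcus:1995}). Given a nonempty set $S$ of positive integers, let $\mathcal Y_S$ be the subshift of $\{a,b\}^{\mathbb Z}$ whose set of blocks is the set of all factors of the words $a\,b^{n_1}a\,b^{n_2}a\,\cdots\,a\,b^{n_k}a$ with $k\geq1$ and $n_1,\dots,n_k\in S$. This set of blocks is visibly factorial, one checks it is prolongable, so it does define a subshift, and $\mathcal Y_S$ is irreducible, since any two of its blocks can be joined by a connecting word $b^n$ with $n\in S$. The essential point is that $\mathcal Y_S\subseteq\mathcal Y_{S'}$ if and only if $S\subseteq S'$: the converse is immediate, and for the direct implication note that if $n\in S$, then $a\,b^n a\in B(\mathcal Y_S)\subseteq B(\mathcal Y_{S'})$, and the only way this block can be a factor of a word $a\,b^{m_1}a\,\cdots\,a\,b^{m_k}a$ with the $m_j$ in $S'$ is to have $n=m_j$ for some $j$, whence $n\in S'$. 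So $S\mapsto\mathcal Y_S$ is an order-embedding of the inclusion order on nonempty subsets of $\mathbb N$ into the inclusion order on irreducible subshifts. Since the former poset has chains of size $2^{\aleph_0}$ made of infinite sets --- for instance $S_r=\{\,n : q_n<r\,\}$ for $r\in\mathbb R$, where $q_1,q_2,\dots$ is an enumeration of $\mathbb Q$ --- the subshifts $\mathcal Y_{S_r}$, $r\in\mathbb R$, form a $2^{\aleph_0}$-chain of irreducible subshifts, which the reduction above turns into a $<_{\mathcal J}$-chain of $2^{\aleph_0}$ regular elements.

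I would not expect a real obstacle in this argument: once the results about $J(\mathcal X)$ are granted, the proof is mostly bookkeeping, and the genuinely deep ingredients --- the existence of $J(\mathcal X)$ and the order correspondence $\mathcal X\subseteq\mathcal Y\Leftrightarrow J(\mathcal Y)\leq_{\mathcal J}J(\mathcal X)$, which is where symbolic dynamics does the work --- are exactly what we are allowed to assume. Within what remains, the step deserving the most care is the chain construction: one must check that $\mathcal Y_S$ is indeed a subshift for every nonempty $S$ (prolongability) and, above all, that $S\mapsto\mathcal Y_S$ is \emph{order-reflecting}, since it is this property that upgrades a continuum-sized chain of subsets of $\mathbb N$ into a continuum-sized chain of subshifts and, through the dictionary, into the asserted chain of regular pseudowords. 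The moral of the proof is that the combinatorial abundance of subshifts over two letters is faithfully mirrored in the $\mathcal J$-order of $\widehat{A^+}$.
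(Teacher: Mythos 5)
Your proof is correct and follows essentially the same route as the paper: both reduce the statement, via the regularity of $J(\mathcal X)$ and the equivalence $\mathcal X\subseteq\mathcal Y\Leftrightarrow J(\mathcal Y)\leq_{\mathcal J}J(\mathcal X)$, to exhibiting a $2^{\aleph_0}$-anti-chain (minimal subshifts) and a $2^{\aleph_0}$-chain of irreducible subshifts over two letters. The only difference is that where the paper cites \cite{Walters:1982} for the existence of such a chain, you construct one explicitly with gap shifts and verify the order-reflection yourself, which is a sound and self-contained substitute.
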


\begin{proof}
  On the one hand, $A^{\mathbb Z}$ contains $2^{\aleph_0}$
  minimal subshifts (cf.~\cite[Chapter~2]{Lothaire:2001}),
  and minimal subshifts clearly form an anti-chain
  for the inclusion.
  On the other hand,
  $A^{\mathbb Z}$ contains a chain of $2^{\aleph_0}$ irreducible
subshifts~\cite[Section 7.3]{Walters:1982}. Hence,
the theorem follows immediately from the
equivalence \mbox{$\mathcal X\subseteq \mathcal Y
\Leftrightarrow
J(\mathcal Y)\leq_{\mathcal J}J(\mathcal X)$}
for irreducible
subshifts.
\end{proof}

Since $J(\mathcal X)$
is regular, it contains
a maximal subgroup, which is a profinite group for the induced topology.
Because all maximal subgroups in
a regular $\mathcal J$-class are isomorphic,
we may consider the abstract profinite maximal subgroup $G(\mathcal X)$ of $J(\mathcal X)$.
The group $G(\mathcal X)$
was called in~\cite{Almeida&ACosta:2013}
the \emph{Sch\"utzenberger group of $\mathcal X$}.
This group is a conjugacy invariant (see~\cite{Costa:2006} for a proof).
We collect other facts about~$G(\mathcal X)$.

\begin{itemize}
\itemsep=0 cm
\item In~\cite{Almeida:2005c} it was shown that $G(\mathcal X)$
  is a free profinite group of rank $k$
  if $\mathcal X$ is
  a subshift over a $k$-letter alphabet that
  belongs to an extensively studied
  class of minimal subshifts, called Arnoux-Rauzy subshifts.
  On the other hand,
  also in~\cite{Almeida:2005c},
  it was shown that the substitution $\varphi$
  defined by $\varphi(a)=ab$
  and $\varphi(b)=a^3b$
  is such that $G(\mathcal X_\varphi)$
  is not a free profinite group. This was the first example
  of a non-free maximal subgroup of a free profinite semigroup.
  More generally, profinite presentations for
  $G(\mathcal X_\psi)$
  were obtained in~\cite{Almeida&ACosta:2013}, for all primitive substitutions
  $\psi$.
    
\item If~$\mathcal X$ is a nonperiodic
 irreducible sofic subshift, then $G(\mathcal X)$ is a free profinite group
 of rank~$\aleph_0$~\cite{Costa&Steinberg:2011}.
 
  \item A sort of geometrical interpretation
    for $G(\mathcal X)$ was obtained in~\cite{Almeida&ACosta:2016b}, when
    $\mathcal X$ is minimal.
    It was shown that $G(\mathcal X)$
    is an inverse limit
    of the profinite completions of the fundamental groups
    of a certain sequence of finite graphs.
    The $n$-th graph in this sequence captures
    information about the blocks of $\mathcal X$ with length $2n+1$.
\end{itemize}

While free profinite semigroups
are interesting \emph{per se},
it is
worthy mentioning that some of the achievements on the Sch\"utzenberger group
of a minimal subshift were used
in the technical report~\cite{Kyriakoglou&Perrin:2017}
to obtain results on code theory,
whose statement  may appear
to have nothing to do with profinite semigroups.
 These results were incorporated and further developed in~\cite{Almeida&ACosta&Kyriakoglou&Perrin:2018}.

\section*{Acknowledgments}

Work partially supported by the Centre for Mathematics of the
    University of Coimbra -- UID/MAT/00324/2013, funded by the Portuguese
    Government through FCT/MCTES and co-funded by the European Regional 
    Development Fund through the Partnership Agreement PT2020.
 The author is indebted to Jorge Almeida for his
comments on a preliminary version of this paper.

\bibliographystyle{amsplain}

\providecommand{\bysame}{\leavevmode\hbox to3em{\hrulefill}\thinspace}
\providecommand{\MR}{\relax\ifhmode\unskip\space\fi MR }
% \MRhref is called by the amsart/book/proc definition of \MR.
\providecommand{\MRhref}[2]{%
  \href{http://www.ams.org/mathscinet-getitem?mr=#1}{#2}
}
\providecommand{\href}[2]{#2}

%\end{multicols}

\end{document}